\renewcommand{\leq}{\leqslant}
\renewcommand{\geq}{\geqslant}
\def\abstract{
\begin{list}{}{
		\fontseries{m}
		\fontsize{10pt}{10}
		\selectfont
		\topsep=0pt
		\partopsep=0pt
		\parskip=0pt
		\itemsep=0pt
                \setlength{\rightmargin}{15mm}
                \setlength{\leftmargin}{\rightmargin}}
\item[]}                      
\def\endabstract{\end{list}\normalsize}
\newcommand{\keywords}[1]{%
	\begin{abstract}
	{\vspace{6pt}
	 {\it Keywords:~}#1}%
	 
	 2020 Mathematics Subject Classification 42A20
	\end{abstract}
}
\newtheorem*{theorem}{Theorem}
\begin{document} 

\begin{center}
\large\textbf{MASSIVE HELSON SETS}
\par\medskip

{Anastasiia Ianina}\\
\end{center}
\abstract
According to the Wik theorem, there exist massive Helson sets on the circle. In particular, they can be of Hausdorff dimension one. We extend Wik's result to the multidimensional case. 
\endabstract

\keywords{absolutely convergent Fourier series, Helson sets}

	\section{Introduction and statement of the result}

 We consider the space $A(\mathbb{T}^d)$ of all continuous functions on the torus $\mathbb{T}^d = \mathbb R^d/(2\pi \mathbb Z)^d$ whose Fourier series $$f(t) \sim \sum\limits_{k \in \mathbb{Z}^d} \widehat{f}(k) e^{i (k, t)}$$ converge absolutely. Here $\mathbb Z$ is the set of integers, $\mathbb R$ is the real line, $(\cdot, \cdot)$ is the usual inner product in $\mathbb R^d$, and \{$\widehat{f}(k), k \in \mathbb Z^d\}$ is the sequence of the Fourier coefficients of $f$: $$\widehat{f} (k) = \dfrac{1}{(2\pi)^d}\int\limits_{\mathbb{T}^d} f(t) e^{-i (k, t)}dt.$$ The space $A(\mathbb{T}^d)$ is a Banach algebra with respect to the natural norm $$||f||_{A(\mathbb T^d)} = \sum\limits_{k \in \mathbb{Z}^d} |\widehat{f}(k)|$$ and the usual multiplication of functions. It is often called the Wiener algebra.

	A compact subset $E$ of the torus $\mathbb{T}^d$ is called a Helson set if every continuous function on $E$ can be extended to $\mathbb{T}^d$ to a function from the Wiener algebra. In other words, $E$ is a Helson set if for every continuous function $f$ on $E$ there exists $\widetilde{f} \in A(\mathbb{T}^d),$ whose restriction $\widetilde{f}|_E$ to $E$ coincides with $f.$

  For basic facts on Helson sets, we refer the reader to \cite{GrahamHare, Kahane}. For example, it is well-known that a Helson set on the circle $\mathbb{T}$ can not contain arbitrary long arithmetic progressions (see \cite[Chaps. 3]{Kahane}) and, hence, every Helson set has zero Lebesgue measure. This result can be easily transfered to the multidimensional case.
 
Generally, at first glance Helson sets seem to be <<thin>>. On the other hand, Wik showed \cite{Wik} that for an arbitrary function $h$ satisfying $h(t)/t \to \infty$ as $t \to +0$ there exists a Helson set $E \subset \mathbb T$ of positive Hausdorff $h$-measure. In particular, there exists a Helson set $E \subset \mathbb T$ of Hausdorff dimension one. Actually, Wik's result is stronger: he proved the existence of a massive Kronecker set on $\mathbb T$ (for details, see the concluding Sec. 3, Remark 2). 

Throughout this paper, we denote by $h$ a nonnegative, continuous, and increasing function on $[0, +\infty)$ with $h(0) = 0$. We call any such $h$ admissible. Recall that the Hausdorff $h$-measure $\mu_h$ of a bounded set $E \subset \mathbb R^d$ is defined by  \begin{equation}
    \mu_h(E) = \lim\limits_{\xi \to 0} \left [ \inf\sum\limits_{\nu = 1}^{\infty}h(\text{diam}(U_\nu))\right ].
    \label{mes}
\end{equation} Here the infinum is taken over all countable coverings of $E$ by open balls $\{U_\nu\}$, whose diameters $\text{diam}(U_\nu)$ are at most $\xi$. For $\alpha > 0$ we denote by $h_{\alpha}$ the function $h(t) =  t^{\alpha}$ and write $\mu_{\alpha}$ instead of $\mu_{h_{\alpha}}$. The Hausdorff dimension $\dim_H(E)$ of a set $E$ is defined as follows: $$\dim_H(E) = \sup \{\alpha: \mu_{\alpha}(E) > 0\}.$$

In this paper, we extend the Wik theorem to the multidimensional case. Note that since a Helson set can not contain a Cartensian product of infinite sets (see Sec. 3, Remark 1), one can not obtain a massive Helson set in $\mathbb T^d$ by just taking the Cartesian product of massive one-dimensional Helson sets. Our result is the following theorem.
	\begin{theorem}
	Assume that an admissible function $h$ satisfies $$\dfrac{h(t)}{t^{d}} \to \infty \quad \text{as} \quad t \to +0.$$ Then there exists a Helson set $E \subset \mathbb{T}^d$ of positive Hausdorff $h$-measure. In particular, there exists a Helson set $E \subset \mathbb T^d$ with $\dim_H(E) = d$.
	\end{theorem}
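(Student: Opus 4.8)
The plan is to transfer Wik's construction \cite{Wik} from $\mathbb T$ to $\mathbb T^d$, with cubes in place of intervals. We build $E=\bigcap_{n\ge 0}E_n$ as a Cantor-type set: $E_0$ is a cube, each $E_n$ is a disjoint union of $N_n$ closed cubes of a common side $\ell_n\downarrow 0$, and every cube of $E_n$ is subdivided into $m_{n+1}\ge 2$ cubes of $E_{n+1}$, so that $N_n=\prod_{i=1}^n m_i$ and $E$ is perfect and totally disconnected. In fact $E$ will be a Kronecker set, which is more than enough since every Kronecker set is a Helson set, with Helson constant $1$ (see \cite[Chaps.~3,~4]{Kahane}).

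To force the Kronecker property, fix at the outset a sequence $(f_m)$ dense in $C(\mathbb T^d,\mathbb T)$, a map $\tau\colon\mathbb N\to\mathbb N$ attaining every value infinitely often, and $\varepsilon_n\downarrow 0$. At stage $n$ we must choose $\lambda_n\in\mathbb Z$ and $E_n\subset E_{n-1}$ so that $|e^{i\lambda_n t_1}-f_{\tau(n)}(t)|<\varepsilon_n$ for every $t\in E_n$, where $t_1$ is the first coordinate of $t$; equivalently we use the character $k=(\lambda_n,0,\dots,0)$. Once this holds on $E_n$ it holds on $E\subset E_n$; and since $E$ is totally disconnected, $\{f_m|_E\}$ is dense in $C(E,\mathbb T)$ (a $\mathbb T$-valued function on $E$ is uniformly close to $e^{ih}$ with $h$ real and locally constant, and $h$ extends to $\mathbb T^d$ by Tietze), so $E$ will indeed be Kronecker. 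A stage is carried out as follows: first shrink $\ell_{n-1}$ so that $f_{\tau(n)}$ varies by less than $\varepsilon_n/4$ on each cube $Q$ of $E_{n-1}$; put $\lambda_n\asymp\varepsilon_n/\ell_n$ (the side $\ell_n$ chosen below). Then the ``good set'' $S_Q=\{t\in Q:\ |e^{i\lambda_n t_1}-f_{\tau(n)}(t)|<\varepsilon_n\}$ is, up to harmless shrinking, a union of $\asymp\lambda_n\ell_{n-1}$ parallel slabs of thickness $\asymp\varepsilon_n/\lambda_n\asymp\ell_n$ and full $(d-1)$-dimensional cross-section, so $|S_Q|\gtrsim\varepsilon_n|Q|$; let the cubes of $E_n$ inside $Q$ be a packing of $\asymp\varepsilon_n(\ell_{n-1}/\ell_n)^d$ disjoint cubes of side $\ell_n$ sitting in $S_Q$. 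Thus $m_n\asymp\varepsilon_n(\ell_{n-1}/\ell_n)^d$. Note that the dimension enters only through this exponent: one uses a one-dimensional character, but carves the $d$-dimensional cubes into a thin ``staircase'' adapted to $f_{\tau(n)}$, which is why Cartesian products are irrelevant.

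It remains to make $\mu_h(E)>0$, by the mass distribution principle. Replacing $h$ by a smaller function (this only strengthens the claim) we may assume $h(t)/t^d$ is non-increasing near $0$; it still tends to $\infty$. Put the natural probability measure $\mu$ on $E$, $\mu(Q)=1/N_n$ for each cube $Q$ of $E_n$; from $m_i\asymp\varepsilon_i(\ell_{i-1}/\ell_i)^d$ we get $N_n\ell_n^d\asymp(\prod_{i\le n}\varepsilon_i)\,\ell_0^d$. Given a ball $\Delta$ of diameter $\delta$, take $n$ with $\ell_n\le\delta<\ell_{n-1}$; the staircase geometry of the $S_Q$'s gives that $\Delta$ meets $\lesssim\varepsilon_n(\delta/\ell_n)^d+(\delta/\ell_n)^{d-1}$ cubes of $E_n$, whence $\mu(\Delta)\lesssim\delta^d/[(\prod_{i\le n-1}\varepsilon_i)\ell_0^d]$. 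Since $t^d/h(t)$ is non-decreasing, $\delta^d/h(\delta)\le\ell_{n-1}^d/h(\ell_{n-1})$, so $\mu(\Delta)\le C\,h(\delta)$ with $C$ independent of $\Delta$ — hence $\mu_h(E)\ge\mu(E)/C>0$ — provided we ensure, for every $n$ and with a uniform constant, $h(\ell_{n-1})/\ell_{n-1}^d\gtrsim 1/[(\prod_{i\le n-1}\varepsilon_i)\ell_0^d]$; since $h(t)/t^d\to\infty$, this is achieved by taking $\ell_{n-1}$ small enough.

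The technical core is the bookkeeping: at every stage $\ell_n$ must be small enough for the uniform-continuity requirement, for the existence of the packing of each $S_Q$, for $\lambda_n$ to be a positive integer of size $\asymp\varepsilon_n/\ell_n$ with at least one slab per parent cube, for $m_n\ge 2$, and for the Frostman lower bound above; one checks that these finitely many constraints — all of the form ``$\ell_n$ below an explicit threshold determined by earlier data, by $\varepsilon_n$ and by $f_{\tau(n)}$'' — can be met simultaneously. The place where the hypothesis $h(t)t^{-d}\to\infty$ is genuinely needed is this last Frostman constraint: it is what allows the $d$-fold multiplicity $(\ell_{n-1}/\ell_n)^d$ of cubes, forced on us by the demand that $E$ be fat, to coexist with $\mu(\Delta)\lesssim h(\delta(\Delta))$. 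Finally, the ``in particular'' follows by applying the first assertion to, say, $h(t)=t^d\log(1/t)$: then $\mu_h(E)>0$ implies $\mu_{h_\alpha}(E)>0$ for every $\alpha<d$, i.e.\ $\dim_H E=d$.
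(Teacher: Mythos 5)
Your plan is not a rewrite of the paper's argument but an attempt to prove something strictly stronger: you claim the set you build is a \emph{Kronecker} set, and deduce the Helson property from ``Kronecker $\Rightarrow$ Helson''. Note that Remark 2 of the paper explicitly states that the existence of massive multidimensional Kronecker sets is an open problem; so if your sketch were complete it would settle that problem, and the burden of proof is correspondingly much higher than ``one checks that these finitely many constraints can be met simultaneously''. The concrete gap is the interaction between the Kronecker requirement and the geometry that makes $E$ massive. You approximate only by characters $e^{i\lambda t_1}$ depending on the first coordinate. If $E$ is to be Kronecker with respect to these, the projection $\pi_1|_E$ must be injective: for distinct $x,y\in E$ with $x_1=y_1$ one has $e^{i\lambda x_1}=e^{i\lambda y_1}$ for every $\lambda$, while $C(E,\mathbb T)$ contains a function with $g(x)=1$, $g(y)=-1$, which then cannot be approximated within distance $1$. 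On the other hand, your count $m_n\asymp\varepsilon_n(\ell_{n-1}/\ell_n)^d$ — the only source of $d$-dimensional massiveness — is obtained precisely by stacking $(\ell_{n-1}/\ell_n)^{d-1}$ cubes with a \emph{common} first-coordinate interval inside each slab, at every stage; your covering estimate for $\mu(\Delta)$ also presupposes that these full $(d-1)$-dimensional cross-sections persist. You never reconcile the two demands: you must show that every ``fiber'' over a first coordinate created at stage $n$ is eventually destroyed at later stages (when point-separating $f_m$'s are handled), \emph{and} that this destruction is compatible with retaining $N_n\ell_n^d\asymp(\prod_{i\le n}\varepsilon_i)\ell_0^d$ and the Frostman bound. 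That reconciliation is exactly the content of the open problem, and it is the one step your write-up waves away.

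For contrast, the paper sidesteps this entirely, and that is the real point of its Lemma: to prove the Helson property it suffices to approximate, within a \emph{fixed} $\varepsilon<1/2$ (not $\varepsilon_n\downarrow 0$), only the finitely many real-valued unimodular (hence locally constant, $\pm1$-valued) functions in $F(E_j)$, and the approximants need only be functions of Wiener norm at most $1$ — the paper uses $\cos(p_jx_s)$ for \emph{all} coordinate directions $s=1,\dots,d$, cycling through the directions over several steps. Because all $d$ directions are used, functions separating two cubes in any coordinate can be handled, no projection needs to be injective, and the fraction of each cube retained per step is a fixed constant $c_1(\varepsilon,d)$, which is what makes the Hausdorff estimate painless. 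If you want to salvage your write-up as a proof of the stated theorem (rather than of the stronger open statement), you should replace the Kronecker target by the paper's Lemma (or prove such a reduction yourself), keep $\varepsilon$ fixed, and use slabs perpendicular to each of the $d$ coordinate axes in turn; as written, the argument for the Helson property rests entirely on an unproved Kronecker claim.
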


     The Helson sets we will construct in this paper are Cantor-type sets, i.e., totally disconnected perfect sets. At the same time, one of the results due to J.-P. Kahane (see, e.g., \cite[Chap. 7,  Sec. 9]{Kahane}) implies that there exists a Helson set in $\mathbb T^2$ which is a continuous curve (see the details in \cite{Korner}); see also \cite{McGehee} and \cite{Muller} for further investigations in this direction. 
	In all known examples, continuous curves which are Helson sets have Hausdorff dimension one. It is natural to ask if there exist massive Helson sets which are continuous curves in $\mathbb T^2$, continuous surfaces in $\mathbb T^3$, etc. We will study this question elsewhere. 

The questions about the existence of massive Helson sets in $\mathbb T^d$ were posed by V. Lebedev (private communication).
	
	\section{Proof of the result} Throughout this paper, we use the following notation: $C(E)$ is the space of continuous functions on a compact set $E \subset \mathbb T^d$ with the usual sup-norm,
 $M(E)$ is the space of measures supported by $E$ with the norm $$||\mu||_{M(E)} = \sup\limits_{f \in C(E), \; ||f|| \leq 1}\left|\int\limits_{E} f(t) d\mu(t)\right|.$$ The Fourier coefficients of a measure $\mu$ are defined by $$\widehat{\mu}(k) = \int\limits_{\mathbb{T}^d} e^{-i (k, t)}d\mu(t), \qquad k \in \mathbb Z^d.$$
	It is well-known (see, e.g., \cite{Graham}), that a compact set $E$ is a Helson set if and only if every $\mu \in M(E)$ satisfies \begin{equation}
	     ||\mu||_{M(E)} \leq C \sup\limits_{k \in \mathbb Z^d} |\widehat\mu(k)|, 
	    \label{measure}
	    \end{equation}
	    where $C > 0$ is independent of $\mu$.
 Note that it is sufficient to verify this condition for real-valued measures.
	    
	    In a usual manner we identify the torus $\mathbb T^d$ with the cube $[0, 2\pi]^d$ and continuous functions on $\mathbb T^d$ with $2\pi$-periodic (with respect to each coordinate variable) continuous functions on $\mathbb R^d$.
	
 Throughout the proof by a cube we mean a closed $d$-dimensional cube in $[0, 2\pi]^d$ with axis-parallel sides. 
	
	The proof consists of three parts. In the first two parts we modify Wik's argument and construct a perfect nowhere dense Helson set on $\mathbb T^d$. In the third part  we will show that this set has a positive Hausdorff $h$-measure.

	\subsection{Construction}

Fix positive $\varepsilon < 1/4$ and an increasing sequence of positive integers $\{p_i\}_{i = 1}^{\infty}$, which we will specify later. We construct a set $E$ by induction so that $E = \bigcap\limits_{i = 0}^{\infty}E_i,$ where, for all $i = 0, 1, ...$,  we have $E_{i+1} \subset E_i$ and $E_i$ is the union of $N_i$ closed disjoint cubes $E_i^k, k = 1, ..., N_i$ with equal side length (say, $l_i$). We denote by $\rho_i$ the minimum of pairwise distances between $E_i^k, k = 1, ..., N_i$.

Let $E_0 = [0, 2\pi]^d$ and assume that $E_i$ is already constructed. We will construct the set $E_{i+1}$ so that $E_{i + 1} = \bigcap\limits_{j = 1}^{2^{N_i}} E_{i, j}$. Here, for all $j = 1, 2,...$,  we have $E_{i, j+1} \subset E_{i, j}$ and $E_{i, j}$ is the union of $N_{i, j}$ closed disjoint cubes $E_{i, j}^k, k = 1, ..., N_{i, j}$ with equal side length which we denote by $l_{i, j}$.

Let $F(E_i)$ be the collection of all continuous functions on $E_i$, which take values $\pm 1$. It is clear that $F(E_i)$ consists of $2^{N_i}$ functions. We denote them by $f_i^k, k = 1, ..., 2^{N_i}$.

Now we construct $E_{i, 1}$. Set $p_{i, 1} = p_{r}$, where \begin{equation}
	r = r(i) = \sum\limits_{v = 0}^{i-1}2^{N_v} + 1
	\label{index}
\end{equation} and consider the set
	\begin{equation}
	    E_{i, 1}^* = \bigcap\limits_{s = 1}^d\{x \in E_i: |f_i^1(x) - g_{i, 1}^s(x)| \leq \varepsilon\},
	    \label{metric}
	\end{equation} where $g_{i, 1}^s(x_1, ..., x_d) = \cos{(p_{i, 1}x_s)}$. 
	
	We require that $p_{i, 1}$ is sufficiently large so that the period of $\cos{(p_{i, 1}x_s)}$ is at least three times less than $l_i$. In this case, it is easy to see that $E_{i, 1}^*$ is the union of closed cubes (and, perhaps, their parts intersecting the boundary of $E_i$) with the side length $l_{i, 1} = C(\varepsilon)/p_{i, 1}$, where
 \begin{equation}
	    C(\varepsilon)  = 2\arccos(1-\varepsilon).
	    \label{konst}
	\end{equation}
 Indeed, in the case $d = 2$, it is clear that for any $k = 1, ..., N_i,$ the sets $\{x \in E_i^k: |f_i^1(x) - g_{i, 1}^1(x)| \leq \varepsilon\}$ and $\{x \in E_i^k: |f_i^1(x) - g_{i, 1}^2(x)| \leq \varepsilon\}$ are the unions of strips parallel to the axes $x_2$ и $x_1$, respectively. The width of each of the strips (if they do not intersect the boundary of $E_i^k$) is equal to the difference between the closest solutions of the equation $\cos(p_{i, 1} x_s) = 1 - \varepsilon$. In the case $d > 2$, the argument is similar and we leave it to the reader.
	
As we said above, the set $E_{i, 1}^*$ is the union of several closed cubes with the side length $l_{i, 1}$ and, perhaps, their parts, which we will not consider further. Denote by $n_{i, 1}^k$ the number of cubes occured in each of the set $E_{i}^k,$ $k = 1, ..., N_{i}$, and put $n_{i, 1} = \min\limits_{k}{n_{i, 1}^k}$. %Now we construct the set $E_{i, 1}$ so that $E_{i, 1} \subset E_{i, 1}^*$ and for all $k = 1, ..., N_{i}$, the set $E_{i, 1}$ contains exactly $n_{i, 1}$ cubes which occured in $E_{i}^k$. Clearly, $E_{i, 1} = \bigcup_{k = 1}^{N_{i, 1}}E_{i, 1}^k$, where $N_{i, 1} = N_{i} \cdot n_{i, 1}$.
Now for all $k = 1, ..., N_{i}$ we arbitrarily choose $n_{i, 1}$ cubes which compose $E_{i,1}^*$ and contained in $E_{i}^k$ and let $E_{i, 1}$ be the union of these cubes. We write $E_{i, 1} = \bigcup_{k = 1}^{N_{i, 1}}E_{i, 1}^k$, where $N_{i, 1} = N_{i} \cdot n_{i, 1}$.
	
	To construct a set $E_{i, 2}$ we first form a set $E_{i, 2}^*$ by approximating (as in (\ref{metric})) the function  $f_i^2 \in F(E_i)$ by functions $g_{i, 2}^s(x_1, ..., x_d) = \cos{(p_{i, 2}x_s}),$ $s = 1, ..., d,$ where $p_{i, 2} := p_{r + 1}$, where $r = r(i)$ was defined in (\ref{index}). Namely, we set
	
\begin{equation*}
	E_{i, 2}^* = \bigcap\limits_{s = 1}^d\{x \in E_{i, 1}: |f_{i}^{2}(x) - g_{i, 2}^s(x)| \leq \varepsilon\}.
\end{equation*}
 Now we again require that $p_{i, 2}$ is sufficiently large so that the period of $\cos{(p_{i, 2}x_s)}$ is at least three times less than $l_{i, 1}$. 

Then for all $k = 1, ..., N_{i, 1}$ we arbitrarily choose $n_{i, 2}$ cubes which compose $E_{i, 2}^*$ and contained in $E_{i, 1}^k$. Here $n_{i, 2} = \min\limits_{k}{n_{i, 2}^k}$ and $n_{i, 2}^k$ is the number of cubes occured in each of the set $E_{i, 1}^k,$ $k = 1, ..., N_{i, 1}$. Let $E_{i, 2}$ be the union of all chosen cubes. Thus $E_{i, 2}$ consists of $N_{i, 2} = N_{i, 1} \cdot n_{i, 2}$ cubes with the side length $l_{i, 2} = C/p_{i, 2},$ where $C$ was define in (\ref{konst}).
	
	Similarly, approximating (as in (\ref{metric})) each of the functions $f_i^k$, $k = 3, ..., 2^{N_i}$, we get the sets $E_{i, j}, j = 3, ..., 2^{N_i}$, and put $E_{i + 1} = \bigcap\limits_{j = 1}^{2^{N_i}} E_{i, j}$. Thus the set $E_{i+1}$ is the union of $N_{i+1} = N_{i, 2^{N_i}}$ cubes and, according to the construction, $E_{i+1}$ has the following property: each function in $F(E_i)$ can be approximated on $E_{i+1}$ with accuracy $\varepsilon$ by at least one of the functions of the form $g_{i, j}^s(x_1, ..., x_d) = \cos(p_{i, j}x_s),$
	where $s \in \{1, ..., d\}, j \in \{1, ..., 2^{N_i}\}.$ This completes the induction.
 
	Finally, we have the sequence $E_1 \supset E_2 \supset ... \supset E_i \supset ...$. Let $E = \bigcap\limits_{i = 1}^{\infty} E_i$. In the next section we will show that $E$ is a Helson set.
	
	\subsection{$E$ is a Helson set}
	To prove that $E$ is a Helson set, it is suffice to show that (\ref{measure}) holds. Let $\mu \in M(E)$ be a real-valued measure. Fix $\delta > 0$. It is clear that there exists a continuous function $\phi$ on $\mathbb T^d$ such that $\sup\limits_{x \in \mathbb T^d}|\phi(x)| \leq 1$ and $$||\mu||_{M(E)} \leq \left|\int\limits_{\mathbb T^d} \phi(x) d\mu(x)\right| + \delta.$$ 
Choose integers $i$ and $j(i)$ large enough, so that for any fixed $k$ the inequality $|\phi(x) - \phi(y)| \leq \varepsilon$ holds for all $x, y \in E_{i, j(i)}^{k}$ (which is possible since $l_{i, j(i)} = C/p_{i, j(i)}$ and so $l_{i, j(i)} \to 0$ as $i \to \infty$). It is easy to see that there exists a continuous function $f$ on $E_{i, j(i)}$, which takes values $\pm 1, 0$, and such that $$|f(x) - \phi(x)| \leq 1/2 + \varepsilon  \qquad \text{for all} \; x \in E.$$ Indeed, consider three cases. The first one is when $k$ is such that $\min{\{\phi(x), \; x \in E_{i, j(i)}^k\}} \in [-1; -1/2]$, in this case we put $f(x) = -1$ for all $x \in E_{i, j(i)}^k$. The second one is when $\min{\{\phi(x), \; x \in E_{i, j(i)}^k\}} \in (-1/2; 1/2)$, then put $f(x) = 0$ for all $x \in E_{i, j(i)}^k$. Finally, the third one is when $\min{\{\phi(x), \; x \in E_{i, j(i)}^k\}} \in [1/2; 1]$, then put $f(x) = 1$ for all $x \in E_{i, j(i)}^k$. 
 
It is clear that there exist functions $f_1, f_2 \in F(E_{i, j(i)})$ such that $f = (f_1 + f_2)/2$. We will assume that $f_1$ and $f_2$ are defined on $\mathbb T^d$ and equal to zero outside $E_{i, j(i)}$. 
Note that, according to the construction of $E$, there exist positive integers $q_1, q_2$ and $s_1, s_2 \in \{1, ..., d\}$ such that $$|f_{\nu}(x) - g_{\nu}(x)| \leq \varepsilon \quad \text{for all} \; x \in E, \; \nu = 1, 2,$$ where $g_{\nu}(x) = g_{\nu}(x_1, ..., x_d) = \cos(q_{\nu} x_{s_{\nu}})$, $\nu = 1, 2$. Let $g= (g_1 + g_2)/2$, then \begin{gather*}||\mu||_{M(E)} \leq \left|\int\limits_{\mathbb T^d} \left(\phi(x) + f(x) - f(x) + g(x) - g(x) \right) d\mu(x)\right| + \delta \leq \\ \int\limits_{E} |\phi(x) -  f(x)| |d\mu(x)| + \int\limits_{E} |f(x) - g(x)| |d\mu(x)| + \left|\int\limits_{\mathbb T^d} g(x) d\mu(x)\right| + \delta \leq \\ \left(\dfrac{1}{2} + 2\varepsilon\right) ||\mu||_{M(E)} + \left|\int\limits_{\mathbb T^d} g(x) d\mu(x)\right| + \delta \leq \left(\dfrac{1}{2} + 2\varepsilon\right) ||\mu||_{M(E)} + \sup\limits_{k \in \mathbb Z^d} |\widehat\mu(k)| + \delta. \end{gather*} Consequently, $$\left(\dfrac{1}{2} - 2\varepsilon\right) ||\mu||_{M(E)} \leq \sup\limits_{k \in \mathbb Z^d} |\widehat\mu(k)| + \delta.$$ Since $\delta > 0$ was chosen arbitrarily and $\varepsilon \in (0; 1/4)$, the inequality (\ref{measure}) holds. Hence $E$ is a Helson set.
	
	\subsection{Estimation of $\mu_h(E)$}
In this section it will be convenient to have single indices for $E_{i, j}$ instead of double ones. Thus we renumber $E_{i, j}$ according to the natural order in which these sets were built during the construction of $E$. We denote them $E_i$, though it is a slight abuse of notation (since we encountered different sets $E_i$ in Section 2.1). So $E = \bigcap\limits_{i = 1}^{\infty}E_i$, for each $i$ the set $E_i$ is the union of $N_i$ cubes $E_i^k, k = 1, ..., N_i$, which we will call cubes of $i$-th rank. For all $k = 1, ..., N_{i-1}$ there are $n_{i}$ cubes of $i$-th rank in each of the cube $E_{i-1}^k$, so $N_i = N_{i - 1} \cdot n_i = \prod\limits_{j = 1}^in_j$. The side length of $E_i^k$ is equal to $l_i = C/p_i,$ where the constant $C$ was defined in (\ref{konst}). It is clear from the construction that there exists a constant $C_1 = C_1(\varepsilon)$ such that pairwise distances between the cubes $E_i^k$ are at least $\rho_i = C_1/p_i$.

Now we shall verify that for any admissible function $h$ the sequence $\{p_i\}_{i = 1}^{\infty}$ can be chosen so that $\mu_h(E) > 0.$ 
Without loss of generality, we may assume that $h(t)/t^d$ is monotonously decreasing. Otherwise, we replace $h$ by $$\widetilde{h}(t) = t^d \inf\limits_{\tau \leq t}\dfrac{h(\tau)}{\tau^d}.$$

Recall that it was required in the construction of $E$ that the sequence $\{p_i\}_{i =1}^{\infty}$  grows sufficiently fast, so that the period of $\cos(p_{i}x_s)$  is at least three times less than $l_{i-1}$. In other words, we require $$\dfrac{l_{i-1}}{2\pi/p_{i}} = \dfrac{C/p_{i-1}}{2\pi/p_{i}} \geq 3, \quad i \geq 2,$$ or, equivalently, \begin{equation}
	\begin{cases}
	\dfrac{p_{i}}{p_{i - 1}} \geq \dfrac{6\pi}{C}, \quad i \geq 2, \\
	p_1 \geq 3.
	\end{cases}
	\label{condition1}
	\end{equation}
	
	It is easy to see that, on $i$-th step of the construction, there are $\lfloor C_2 p_{i}^d / p_{i-1}^d \rfloor$ cubes (and their parts) appearing in each of $E_{i-1}^k$, where $C_2 = C_2(\varepsilon, d) = \left(\dfrac{C}{2\pi}\right)^d < 1$ (here $\lfloor u \rfloor$ denotes the integer part of $u$). Indeed, in the case $d = 2$, the number of strips parallel to the axes $x_1$ and $x_2$, which occured in the construction of $E_i^*$ (see the paragraph below (\ref{konst})), is at least $\dfrac{l_{i-1}}{2\pi/p_{i}}.$ Since $l_{i-1} = C/p_{i-1}$, there are $\lfloor C_2 p_{i}^2 / p_{i-1}^2 \rfloor$ cubes of $i$-th rank in each of cube of $(i-1)$-th rank. In the case $d > 2$, the argument is similar.
	
	Recall that in the construction of $E_{i}$, we excluded the parts of the cubes which might intersected the boundary of $E_{i-1}$ (that is, $(d-1)$-dimensional faces of the cubes $E_{i-1}^k$). By the above argument, there are at most $C_3 p_{i}^{d - 1}/p_{i - 1}^{d - 1}$ $(i \geq 2)$ such parts in each of $E_{i-1}^k$, where $C_3 = C_3(\varepsilon, d) = 2d \left(\dfrac{C}{2\pi}\right)^{d-1}$  (here the factor $2d$ corresponds to the number of $(d-1)$-dimensional faces of a $d$-dimensional cube).
	
	Thus we obtain
	$$n_{i} \geq C_2 \left(\dfrac{p_{i}}{p_{i - 1}}\right)^d - C_3 \left(\dfrac{p_{i}}{p_{i - 1}}\right)^{d-1}.$$ It is clear that there exists $0 < C_4 < 1$ such that\begin{equation}
 \begin{cases}
     C_4 \left(\dfrac{p_{i}}{p_{i - 1}}\right)^d \leq n_i \leq C_2 \left(\dfrac{p_{i}}{p_{i - 1}}\right)^d, \quad i \geq 2, \\
     C_4 p_1^d \leq n_1 \leq C_2 p_1^d.
	    \label{dimension}
 \end{cases}
	\end{equation}
	
	Now we construct an outer measure $\mu$ supported by $E$ as follows: we set $\mu(E_0) = 1$, and then $\mu(E_i^k) = 1/N_i$ for all $i$ and $k = 1, ..., N_i$.  Clearly, it is a measure on the ring generated by the sets $E_i^k$, $i = 1, 2, ...,$ and $k = 1, ...,  N_i$. By Carath\'eodory's extension theorem, it can be extended to an outer measure $\mu$ on the $\sigma$-algebra generated by this ring. It is easy to see that $\mu$ is supported by $E$ and $\mu(E) = 1$.

 Fix $\delta > 0$ and consider a covering of $E$ by open balls $U_{\nu}, \nu = 1, ..., \nu_0,$ with $\text{diam}(U_{\nu}) \leq \delta$ (it is suffices to consider finite coverings due to compactness of $E$). Let $U$ be one of these balls. Clearly, there exists $i$ with $\rho_i \leq \text{diam}(U) < \rho_{i-1}$. 
 
 Note that $U$ intersects at most $\left(\text{diam}(U)/\rho_i + 1\right)^d$  cubes $E_i^k$. Indeed, in the case $d = 2$, since $\text{diam}(U) < \rho_{i-1}$, the ball $U$ can intersect at most one cube of $(i-1)$-th rank, and hence at most $n_i$ cubes of $i$-th rank. Take the diameter of $U$ which is parallel to the $x_1$-axis, and project the cubes of $i$-th rank, intersecting $U$, on this diameter. These projections are disjoint closed intervals. Denote the number of them by $m$. We have $\text{diam}(U) \geq (m - 1)\rho_i,$ since $U$ contains $m-1$ intervals between projections, and each interval has length аt least $\rho_i$. Since $U$ intersects at most $m^2$ cubes of $i$-th rank, the required bound follows. In the case $d > 2$, the claim follows in a similar way.
 
Thus $U$ intersects at most $$
	\left(\dfrac{\text{diam}(U)}{\rho_i} + 1\right)^d \leq \left(\dfrac{2 \text{diam}(U)}{\rho_i}\right)^d$$ cubes $E_i^k$.
Since $\mu(E_i^k) = \dfrac{1}{N_i} = \dfrac{1}{n_1 \cdot ... \cdot n_i}$, by the above estimate we have
	    $$\mu(U) \leq \dfrac{1}{n_1 \cdot ... \cdot n_i} \left(\dfrac{2 \text{diam}(U)}{\rho_i}\right)^d.$$
Then, using (\ref{dimension}), we get $$\mu(U) \leq \dfrac{2^d}{(C_4)^i p_i^d} \left(\dfrac{\text{diam}(U)}{\rho_i}\right)^d.$$ 
     Since $h(t)/t^d$ is monotonously increasing as $t \to 0,$ we see that $$\dfrac{h(\rho_{i-1})}{\rho_{i-1}^d} \leq \dfrac{h(\text{diam}(U))}{\text{diam}(U)^d};$$ and thus $$\left(\dfrac{\text{diam}(U)}{\rho_i}\right)^d \leq \left(\dfrac{\rho_{i-1}}{\rho_i}\right)^d \dfrac{h(\text{diam}(U))}{h(\rho_{i-1})}.$$ Therefore, since $\rho_i = C_1/p_i$, we get \begin{equation}
     	\mu(U) \leq \dfrac{2^d}{(C_4)^i p_i^d} \left(\dfrac{\rho_{i-1}}{\rho_i}\right)^d \dfrac{h(\text{diam}(U))}{h(\rho_{i-1})} = \dfrac{2^d}{(C_4)^i} \dfrac{h(\text{diam}(U))}{p_{i-1}^d h(C_1/p_{i-1})}. \label{mesU}
     \end{equation} 
 Since $h(t)/t^d \to \infty$ as $t \to 0$, the sequence $\{p_i\}_{i = 1}^{\infty}$ can be chosen so that the inequality
 \begin{equation}
     \dfrac{2^d}{(C_4)^i p_{i-1}^d h(C_1/p_{i-1})} \leq 1
     \label{cond2}
 \end{equation} holds for all positive integers $i$.
 
Choosing a sequence $\{p_i\}_{i = 1}^{\infty}$ so that (\ref{condition1}) and (\ref{cond2}) hold, we obtain (see (\ref{mesU})) $$\mu(U) \leq h(\text{diam}(U))$$ for any ball $U$ whose diameter is at most $\delta$. Thus $$1 = \mu(E) \leq \mu \left(\bigcup\limits_{\nu = 1}^{\nu_0} U_{\nu}\right) \leq \sum\limits_{\nu = 1}^{\nu_0}\mu (U_\nu) \leq \sum\limits_{\nu = 1}^{\nu_0} h(\text{diam}(U_\nu))$$ and, hence, $\mu_h(E) > 0$.
	
		\section{Remarks}
	\begin{enumerate}
	    \item As we mentioned in the Introduction, a Helson set cannot contain a Cartesian product of infinite sets. For the sake of completeness we provide a short and simple proof of this fact. For simplicity, we consider the case $d=2$ (the proof in the general case is basically the same). 

	   Let $E \subset \mathbb T^2,$ $X_1 = \{x^1_1, ..., x^N_1\} \subset \mathbb T,$ $X_2 = \{x_2^1, ..., x^N_2\} \subset \mathbb T$, and $X_1 \times X_2 \subset E.$ Define the measure $$\mu = \sum\limits_{j, k = 1}^N u_{j, k} \delta_{j, k},$$ where $\delta_{j, k}$ is the Dirac delta function at the point $(x^j_1, x^k_2),$ and
    $$
    u_{j, k} = \dfrac{1}{\sqrt{N}} \exp\left(\dfrac{2\pi i}{N} j k\right).
    $$ 
    Clearly, the matrix $U = \{u_{j, k}\}_{j, k = 1}^N$ is unitary. Consider the vectors $a_{\lambda_1} = (e^{-i \lambda_1 x_1^1}, ..., e^{-i \lambda_1 x_1^N})$ and $a_{\lambda_2} = (e^{-i \lambda_2 x_2^1}, ..., e^{-i \lambda_2 x_2^N}),$ where $\lambda_1, \lambda_2 \in \mathbb Z.$ Obviously,
\begin{gather*}
	    \sup\limits_{\lambda \in \mathbb Z^2}|\widehat \mu(\lambda)| = \sup\limits_{\lambda_1, \lambda_2 \in \mathbb Z} \left | \sum\limits_{j, k = 1}^N u_{j, k} e^{-i \lambda_1 x^j_1} e^{-i \lambda_2 x^k_2} \right | = \sup\limits_{\lambda_1, \lambda_2 \in \mathbb Z} |(U a_{\lambda_1}, a_{\lambda_2})| \leq  \\ \leq ||U|| \cdot ||a_{\lambda_1}||_{2} \cdot ||a_{\lambda_2}||_{2} \leq ||U|| N = N,
	    \end{gather*}
	    where $||\cdot||_2$ is the standard norm in $\mathbb C^N$, and $||\cdot||$ stands for the norm of a matrix as of a linear map from $\mathbb C^N$ to $\mathbb C^N.$  
	    
	    Assuming that $E$ is a Helson set, we obtain (see (\ref{measure})) 
$$N^{3/2} = ||\mu||_{M(E)} \leq C \sup\limits_{\lambda \in \mathbb Z^2}|\widehat \mu(\lambda)| \leq C N,$$
	    where $C > 0$ does not depend on $N$, which is impossible if $N$ is large enough.  
	    
	   \item A compact set $E \subset \mathbb T^d$ is called a Kronecker set if the set  \{$e^{i(n, t)}, n \in \mathbb Z^d$\} is dense (with respect to the metrics of the space of continuous functions) in the set of complex-valued continuous functions on $E$, whose absolute value is equal to one. It is well-known that every Kronecker set is a Helson set (see, i.g., \cite[Chaps. 7, Sec. 7]{Kahane}). As we mentioned in the Introduction, the Wik theorem was in fact proved in a stronger form: for any nonnegative increasing continuous on $[0, +\infty)$ function $h$ with $h(t)/t \to \infty$ as $t \to 0,$ there exists a Kronecker set in $\mathbb T$ of positive Hausdorff $h$-measure. The question about the existence of a massive Kronecker set $\mathbb T^d$ remains open.

     Recall the question on the existence of massive Helson sets which are continuous curves in $\mathbb T^2$ (continuous surfaces in $\mathbb T^3$, etc.), stated in the Introduction. We note that a Kronecker set can not be a continuous curve since every Kronecker set is totally disconnected (see, e.g., Theorems 5.2.9 and 5.1.4 in \cite{Rudin}).
     \end{enumerate}
	
	\section{Acknowledgements} The author is grateful to Vladimir Lebedev for his help and attention to this work.

\end{document}